\newcommand\xleftrightarrow[2][]{%
  \ext@arrow 9999{\longleftrightarrowfill@}{#1}{#2}}
\newcommand\longleftrightarrowfill@{%
  \arrowfill@\leftarrow\relbar\rightarrow}
\newcommand*{\encircledd}[1]{\relax\ifmmode\mathpalette\@encircledd@math{#1}\else\@encircledd{#1}\fi}
\newcommand*{\@encircledd@math}[2]{\@encircledd{$\m@th#1#2$}}
\newcommand*{\@encircledd}[1]{%
	\tikz[baseline,anchor=base]{\node[draw,circle,outer sep=0pt,inner sep=.2ex] {#1};}}
\newtheorem{thm}{Theorem}[section]
\newtheorem{cor}{Corollary}
\newtheorem{dfn}{Definition}[section]
\def\N{\mathbb N}
\def\R{\mathbb R}
\def\Q{\mathbb Q}
\def\Z{\mathbb Z}
\def\H{{\bf U}}
\def\PSL{{\rm PSL}}
\def\SL{{\rm SL}}
\def\T{\mathcal T^{*}_S}
\def\G{{\Gamma}}
\def\g{{\rm H_q}}
\def\D{\Delta^{*}}
\def\d{\Delta^{*'}}
\def\dd{\Delta^{*"}}
\def\E{\mathcal E_{f, S}}
\begin{document}

\title[Hecke triangle Groups and Dessin d'enfant]{Hecke triangle Groups and Dessin d'enfant} 
\author{Devendra Tiwari}
 \address{Bhaskaracharya Pratishthan, 56/14, Erandavane, Damle Path,
Off Law College Road, Pune 411004}
\email{devendra9.dev@gmail.com}
\subjclass[2000]{Primary 20H10; Secondary 51M10}
\keywords{ Modular group, Hecke Group, Graphs, Riemann Surfaces.}
\date{\today}
\begin{abstract}
In this work we will construct bipartite graphs, famously known as Dessin d'enfant, corresponding to finite index subgroups of Hecke triangle groups $(2, q, \infty )$. Then using a results of \cite{ll} we shall show the correspondences among the special polygons, the bi-partite graph, and the tree diagram for a finite index subgroup of the Hecke triangle groups $(2, q, \infty )$.
\end{abstract}
\maketitle


\section{Introduction}

The idea of representing finite index subgroups of modular group $\G = \PSL(2, \Z)$ (or corresponding coverings) by drawings, under various names, has occurred to lots of people. The best in our knowledge, the earliest one was in a manuscript of Tom Storer, dated about 1970. The drawings for the subgroups $\Gamma_0(N)$ of the modular group occur in the thesis of Dave Tingley \cite{td}. For more literature of this topic, which is related to theory of maps on Riemann surfaces we refer to \cite{bb, js, rk, tl, tm, tr} and references therein. Importance of maps lies in the Grothendieck's program \cite{ag} that there is an intimate link between this purely combinatorial object, arithmetic geometry of curves and the absolute Galois group $G_{\Q}$ of the field of algebraic numbers.

\medskip Among these works, for our present study, we have chosen Ravi Kulkarni's work \cite{kul}, in which he applies hyperbolic geometry to a classical problem in number theory of finding fundamental domains for congruence subgroups of the classical modular groups. This leads to an algorithmic construction of fundamental domain for the action of modular subgroups on the upper half plane $\H$ which has a particularly nice shape. Namely, a \emph{special polygon} is a connected polygonal region $P$ in the upper-halfplane such that all of the vertices of $P$ will be cusps and its boundary $\partial P$ consists entirely of even and odd edges together with a side-pairing satisfying certain considtions (see next section for precise definition). This work of Kulkarni substantially solves a classical problem of Rademacher \cite{rd}. The method developed in this work is of ``Farey Symbols" based on Farey subdivisions serving as endpoints of the boundary arcs of the fundamental domain. 

\medskip In the same work \cite{kul}, Kulkarni also shows that trees and graphs serve a role in classifying finite index subgroups of the Modular group $\Gamma = \PSL(2, \Z)$. More precisely he relates the conjugacy classes of a finite index subgroups of the modular group with a combinatorial gadget, known as ``bipartite cuboid graphs", which are special dessins d'enfants. 

\begin{dfn}

    A bipartite cuboid graph is a finite connected graph drawn on a Riemann surface such that its vertices are either black or white. Moreover, every edge in the graph connects a black to a white vertex and the valency (that is, the number of edges incident to a vertex) of a black vertex is either 1 or 2, that of a white vertex is either 1 or 3. Finally, for every white vertex of valency 3, there is a prescribed cyclic order on the edges incident to it. 

\end{dfn}

\medskip It is this combinatorial aspects of Kulkarni's work, which also has topological underpinnings \cite{rk} in the study of modular group, on which we focus in this work. In particular we show that {\rm
Theorem (4.2) of \cite{kul} generalizes for Hecke group, if one replaces the bi-partite cuboid 
graphs by the graphs in which every vertex has valence 1 or $q$, we call such graph \emph{bi-partite $q$-boid graph}\footnote{This term  was coined by Ser Peow Tan during his virtual talk in the year-long program at BP, Pune in December 2021.} , see section 3 for the precise definition. The Hecke groups are an obvious generalisation of the modular group $\G$. Hecke groups are class of triangle groups $\Delta (a, b, c)$ of the signature $(2, q, \infty)$ where $q \geq 3 \in \Z$, we will denote them by $\g$. They are  Fuchsian group of the first kind with signature $(2, q, \infty$) and they are isomorphic to the free products of two finite cyclic groups of orders $2$ and $q$.

\medskip For such graphs we prove the following in section 3:

\begin{thm} The conjugacy classes of subgroups of finite index in $\g$ are in 1-1 correspondence with the isomorphism classes of bipartite $q$-boid graphs. 
\end{thm}

In section 4, we define a closely related notion $q$-boid tree diagram and deduce the correspondences among the following objects associated with a finite index subgroups of $H_q$:

\begin{enumerate}
    \item Special polygons, 
\item Bi-partite $q$-boid graphs, 
\item $q$-boid tree diagrams.

\end{enumerate}

More precisely we prove the following results in section 4 

\begin{thm} There is a finite-to-one map from isomorphism classes of $q$-boid tree diagrams and isomorphism classes of bipartite $q$-boid graphs. 
 \end{thm}

 \begin{thm} There is a finite-to-one map from special polygons onto the isomorphism classes of $q$-boid tree diagrams. 
 \end{thm}


\section{Preliminaries of Hecke Groups}

\medskip In this section we will breifly review the some basic definitions and properties of Hecke groups and fix the notation, which we will requre for the proofs in the section 3. For most part we refer the work of M.L. Lang et.al., \cite{ll}, see also \cite{cm} and references therein. 

\medskip To study Dirichlet series satisfying some functional equations, Hecke introduced these groups in \cite{eh}. In \cite{ll} C.L. Lang and M.L. Lang studied Hecke groups and produced an algorithm to find the Hecke Farey Symbol, an extension of the classical Farey Symbol.   

\medskip For $q \in \N, \; \; q \geq 3$, let $\mu_q = 2\cos (\frac{\pi}{q})$, then the Hecke triangle group $\g$ is the group generated by the following Mobius transformations:

$$ z’ = A(z) = - 1/z, \; \; \; \;  z’ = B(z) = - \frac{1}{z + \mu_q}, \; \; \; q \in \N, \; \; q \geq 3. $$

Having presentation 

$$\g = \langle A, B : A^2 = B^q = e \rangle$$

\medskip These groups are discrete if and only if $\mu \geq 2$ or
$\mu = \mu_q = 2\cos (\frac{\pi}{q})$, where $q$ is an integer greater than or equal to $3$.

\medskip Adding the reflection $R(z) = 1/z$ to the generators $A$ and  $B$ of $\g$ gives the extended (or Coxeter) Hecke group with presentation

$$\g^* = \langle R, A, B : R^2 = A^2 = B^q =  (RA)^2 = (BR)^2 = e \rangle. $$

\medskip The extended Hecke group can also be generated by $P, Q$ and $R$ where $A = RP$ and $B = QR$, with defining relations 

$$ P^2 = Q ^2 = R^2 = (RP)^2 = (QR)^2 = e.$$

\medskip Geometrically, $P, Q$ and $R$ denote reflections along the sides of a triangle $\D$ denote the $(2, q, \infty)$ triangle with vertices at $i,e^{\frac{\pi i}{q}}$  and $\infty$ and interior angles equal to $\frac{\pi}{2}, \frac{\pi}{q}$ and zero. $\D$ is a fundamental domain of the Coxeter group $\g^*$. In this sense, the extended Hecke group is considered a special type of triangle group denoted by $\g^*$. Hecke group $\g$ is index two subgroup extended Hecke group $\g^*$.

\medskip Hecke groups are special because it contains cusps, which does not exists if $a, b, c < \infty$. Recall that cusps are fixed by parabolic elements, and any parabolic element is conjugate by some matrix $A \in  \SL(2, \R)$ to $ \pm \begin{bmatrix} 1 & b \\ 0 & 1 \\ \end{bmatrix}$ for some $b \in \R$. Therefore, cusps are fixed points of elements with infinite order, which does not exists in the case of $a, b, c < \infty$ where all the generators have finite order.

\subsection{Tessellation of Hecke Group}
\medskip Let $\H$ be the union of the upper half plane and the set $\{g(\infty) : g \in \g^*\}$. The $\g^*$ translates of $\D$ form a tessellation $\mathcal{T}^{*}$ of $\H$ (endowed with the hyperbolic metric) by  $\D$. 

\medskip Under the action of $\g^*$ the translates of $i, e^{\frac{\pi i}{q}}$  and $\infty$ are called even vertices, odd vertices and cusps (free vertices) of $\mathcal{T}^{*}$ respectively. Whereas the translates of the hyperbolic line joining $i$ to $\infty$ (resp. $e^{\frac{\pi i}{q}}$ to $\infty$) are called even edges (resp. odd edges) of $\mathcal{T}^{*}$. The translates of the hyperbolic line joining $i$ to $e^{\frac{\pi i}{q}}$ are called $f$-edges of $\mathcal{T}^{*}$. The hyperbolic line $(0, \infty)$ consists of two even edges. The translates of $(0, \infty)$ are called the even lines of $\mathcal{T}^{*}$. The hyperbolic line joining x and y is denoted by $(x, y)$.

\begin{dfn}
    A tessellation $\T$ on an orientable surface $S$ is a homeomorphism with the space obtained as a union of finitely many copies $\D_i$ of $\D$ where each even edge, odd edge, $f$-edge is isometrically glued to another even edge, odd edge, $f$-edge respectively so that $S$ is locally modelled on $\H$, or $Z_2 / \H$ or $Z_q / \H$ where $\Z_2, \Z_q$ act on $\H$ by a rotation around a fixed point through an angle $\pi$ or $\frac{2\pi}{q}$ respectively. 
\end{dfn}

\subsection{Special Polygon} A convex hyperbolic polygon $P$ of $\H$ is a union of some $q$-gons and a finite number of $r_i$-clusters $(r_i|q, 1 \leq r_i < q)$. The $q$-gons and the $r_i$-clusters of $P$ are called the tiles. The tiles intersect each other (if any) at either free vertices (cusps) or even lines. A special polygon $M_{\Phi} = (P, I_{\Phi})$ of $\H$ is a convex hyperbolic polygon $P$ together with a set of side
pairings $I_X$ satisfying the rules below.

\begin{enumerate}
    \item An odd edge $e$ is always paired with an odd edge $f$ (in the same $r$-cluster) and makes an internal angle $\frac{2r\pi}{q}$ with $f$. The vertex where $e$ and $f$ meet is an odd vertex of $P$. Both $e$ and $f$ are considered as sides of $P$, and are called its odd sides.

    \item Let $e$ and $f$ be two even edges in the boundary of $P$ forming an even line. Then either 
    
    \begin{enumerate}
        \item[(i)] $e$ is paired with $f$, both $e$ and $f$ are considered as sides of $P$, and are called its even sides, the point where $e$ and $f$ meet is an even vertex of $P$, or

        \item[(ii)] $e$ and $f$ form a free side of $P$, and this free side is paired with another free side of $P$.

    \end{enumerate} 

    \item $0$ and $\infty$ are vertices of $P$.
    
\end{enumerate}

\subsection{Some Results} Here we note some results from literature, which we will need in the proof of our theorems in the next section.

\medskip Then $S$ makes a complete $2$-dimensional hyperbolic orbifold in the sense of Thurston, (see chapter 13 in \cite{wt}).

\begin{thm}({\cite{kul}})\label{ku}
    The conjugacy classes of subgroups of finite index 
in $\PSL(2, \Z)$ are in 1-1 correspondence with the isomorphism classes of bipartite cuboid graphs.
\end{thm}
\begin{thm}({\cite{ll}})\label{ll}
Let H be a subgroup of finite index of $\g$. Then H has an admissible fundamental domain
\end{thm}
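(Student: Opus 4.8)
The plan is to adapt to $\g \cong \Z/2 * \Z/q = \langle K\rangle * \langle G\rangle$ the construction Kulkarni carried out for $\PSL(2,\Z)=\Z/2*\Z/3$, an ``admissible fundamental domain'' being the Hecke analogue of a special polygon. First I would fix once and for all the standard fundamental domain $\mathcal F_0$ for $\g$ acting on $\H$: the union of the $(2,q,\infty)$ triangle $\Delta$ with one of its reflections, a region carrying a single cusp, one elliptic vertex of order $2$ fixed by $K$, and a pair of order-$q$ corners identified by a conjugate of $G$. The $\g$-translates of $\mathcal F_0$ tile $\H$; equivalently the $\hat H$-translates of $\Delta$ realize the $*2q\infty$ tessellation. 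The incidence graph of this tiling is the Bass--Serre tree $\mathcal T$ of the free product, on which $\g$ acts with quotient a single segment.

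Second, put $n=[\g:H]<\infty$ and examine $H\backslash\mathcal T$, a finite graph assembled from $n$ copies of that segment. Fix a spanning tree $T\subset H\backslash\mathcal T$ and lift it to obtain coset representatives $g_1,\dots,g_n$ of $H\backslash\g$; then
\[
P \;=\; \bigcup_{i=1}^{n} g_i\,\mathcal F_0
\]
is connected and simply connected, hence a fundamental domain for $H$. Its boundary arcs occur in pairs: each such arc lies on an edge of the tessellation corresponding to an edge of $H\backslash\mathcal T$ outside $T$, and the element of $\g$ identifying the two sides of that tessellation edge, conjugated by the appropriate $g_i,g_j$, lies in $H$. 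These side-pairing transformations generate $H$, and the ideal vertices of $P$ represent the cusp classes of $H$.

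Third, I would normalize $P$ to an admissible domain, which needs two moves: (i) \emph{folding} — whenever an order-$2$ point (fixed by a conjugate of $K$) or an order-$q$ point (fixed by a conjugate of $G$) lies on $\partial P$, the corresponding reflection/rotation identifies two adjacent boundary pieces, which I collapse to a single ``even'' resp.\ ``odd'' boundary arc, exactly as in the modular case; and (ii) a \emph{convexity adjustment} showing that the $g_i$ can be chosen (building $P$ one translate at a time) so that every partial union is a convex hyperbolic polygon to which the next translate attaches across a boundary edge without destroying convexity or foldability. Running this induction on $n$ outputs a convex polygon whose sides are classified as even, odd, or free (cusp-to-cusp) arcs with side-pairings in $H$ — an admissible fundamental domain — and the resulting combinatorial labelling is precisely a Hecke Farey symbol in the sense of \cite{ll}.

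The step I expect to be the main obstacle is the convexity/extension lemma in (ii): proving that a convex union of $k<n$ translates always admits a boundary edge across which one may adjoin the $(k{+}1)$-st translate while keeping the region convex and compatible with the foldings at the elliptic points. In Kulkarni's setting this is underwritten by the rigidity of the Farey tessellation; for $\g$ one must instead use the exact hyperbolic geometry of the $(2,q,\infty)$ tiling — in particular the way the tiles close up around the order-$2$ and order-$q$ fixed points — to rule out the bad local configurations. A secondary, more routine point is to track the elliptic elements of $H$ of order $d\mid q$ that arise (via the Kurosh subgroup theorem) from intersecting $H$ with conjugates of $\langle G\rangle$, and to check that each is represented by an odd vertex of $P$; this is bookkeeping once the tessellation picture is in place.
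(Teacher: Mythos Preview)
The paper does not prove this theorem. It is stated as Theorem~\ref{ll} with attribution to \cite{ll} and no argument is supplied; it is then invoked as a black box inside the proof of the main result on bipartite $q$-boid graphs (``We may obtain $S$ from a special polygon by the process described in the proof of Theorem~\ref{ll} by \cite{ll}\ldots''). There is therefore nothing in the present paper against which to compare your proposal.

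As a standalone sketch your plan is sensible and is in the spirit of how the cited reference proceeds: build a connected, simply connected union of $[\g:H]$ translates of the standard $(2,q,\infty)$ tile via a choice of coset representatives, then normalise the boundary at the elliptic points to obtain the even/odd/free side structure. Two remarks. First, your step~(ii) is indeed where all the content lies, and the Bass--Serre tree packaging, while clean, does not by itself deliver the convexity: you will still need the explicit geometry of how the $q$ tiles close up around an order-$q$ fixed point to show you can always extend. Second, for $q>3$ the ``odd'' vertices can carry stabiliser of any order $d\mid q$, so the folding at an order-$q$ point is not a single move as in the modular case but a fan of $q/d$ tiles; your bookkeeping paragraph acknowledges this, but it affects the shape of the admissibility definition, not merely the list of generators.
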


\section{Bipratite $q$-boid Graph}

\begin{dfn} A bipartite $q$-boid graph is a finite graph whose vertex set is divided into two disjoint subsets $V_o$ and $V_1$, such that 
\begin{enumerate}

\item[(i)] every vertex in $V_o$ has valence 1 or 2, 
\item[(ii)] every vertex in $V_1$ has valence 1 or $q$, 
\item[(iii)] there is a prescribed cyclic order on the edges incident at each vertex of valence $q$ in $V_1$ 
\item[(iv)] every edge joins a vertex in $V_o$ with a vertex in $V_1$. 

\end{enumerate}
\end{dfn}

An isomorphism of bipartite $q$-boid graphs is an isomorphism of the underlying graphs preserving the cyclic orders on the edges of each vertex of valence $q$. 

\begin{thm} The conjugacy classes of subgroups of finite index in $\g$ are in 1-1 correspondence with the isomorphism classes of bipartite $q$-boid graphs. 
\end{thm}
\begin{proof} Let $\D$ be the hyperbolic triangle described before and $S$ an orientable surface. Consider a tessellation $\T$ on $S$, we may obtain $S$ from a special polygon, associated with a finite index subgroup $\Phi$ of $\g$, by the process described in the proof of {\rm Theorem $\ref{ll}$} (see \cite{ll}), noted in previous section,  and so it is of the form $\Phi / \H$. Since $\g$ is the full group of orientation isometries of $\H$ preserving $\mathcal{T}^{*}, (S, \T)$ determines the subgroup $\Phi$ upto conjugacy in $\g$, and so the tessellation-preserving isometry classes of spaces $(S, \T)$ are in 1-1 correspondence with the conjugacy classes of subgroups of finite index in $\g$. 

\medskip Now we will prove the 1-1 correspondence of the tessellation-preserving isometry classes of the spaces $(S, \T)$ with the isomorphism classes of the bipartite $q$-boid graphs. Given $(S, \T)$ we will consider $\E$, which is the union of the $f$-edges in $S$. We can give $\E$ a natural structure of a bipartite $q$-boid graph by taking $V_0$ resp. $V_1$, to be the set of even vertices resp. odd vertices, and the cyclic order on the edges incident at a vertex of valence $q$ being the one induced from the orientation of $S$. 

\medskip Conversely when $G$ be a bipartite $q$-boid graph, consider $\D_e, \d_{e}$, which are two sets of copies of $\D$ each indexed by the edges $e$ of $G$. Attach $\D_e$ to $\d_e$ isometrically along $e$ so that a vertex in $V_0$ in one copy of $e$ is attached to a vertex in $V_0$ in the other copy of $e$. This gives us $\dd_e$ which is isometric to a hyperbolic triangle with angles $0, 0, \frac{2\pi}{q}$. There is a canonical isometry which takes $\dd_e$ to the hyperbolic triangle with vertices $0, \infty,$ and $\frac{2\pi}{q}$. Using this isometry we can give $\dd_e$ a canonical orientation and a ``counterclockwise" orientation on its boundary edges. 

\medskip If two distinct edges $e$ and $f$ share an even vertex then attach $\dd_e$ to $\dd_f$ isometrically along the complete geodesics made up of even edges in the orientation reversing way. 

\medskip If $e_1, e_2$ share an odd vertex $v$ then there will be total $q$ number of edges $e_1, e_2, \dots e_q$ also sharing $v$. By symmetry we may suppose that the cyclic order is $e_1, e_2, \dots e_q$. Orient these edges so that they ``emanate" from $v$. In $\dd_{e_i}$ using its orientation we can then uniquely determine an odd edge $e_k$ which makes the angle $+\frac{\pi}{q}$ with $e_i$. Similarly there is a unique choice of an odd edge $l$ in $\dd_{e_j}$ which makes the angle $-\frac{\pi}{q}$ with $e_j$. We attach $\dd_{e_i}$ to $\dd_{e_j}$ so that $e_k$ is isometrically identified with $e_l$. If $e$ has a terminal even (resp. odd) vertex $v$ we identify the even (resp. odd) edges of $\dd_e$ incident at $v$ in an orientation-reversing way. 

\medskip This constructions defines a $(S, \T)$ which is canonically attached to $G$ and gives the 1-1 correspondence among the isomorphism classes of bipartite $q$-boid graphs and the tessellation-preserving isometry classes of the spaces $(S, \T)$.

\end{proof}

\begin{cor} Given an index $n$ subgroup of Hecke group $\g$, the corresponding bipartite q-boid graph has $n$ edges.
\end{cor}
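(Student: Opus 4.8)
The plan is to deduce this corollary directly from the 1-1 correspondence established in the preceding theorem, by tracking the combinatorial invariant "number of edges" through the dictionary between the tessellated orbifold $(S,\T)$ and the bipartite $q$-boid graph $G$. The key observation is that, in the construction of $(S,\T)$ from a special polygon in the proof of Theorem \ref{ll} (following \cite{ll}), each copy $\D_i$ of the basic hyperbolic triangle $\D$ corresponds to one coset of the index-$n$ subgroup $\Phi$ in $\g$, so that $S$ is tiled by exactly $n$ copies of $\D$. I would make this count precise first, and then show that the number of $f$-edges of $S$ — which by the theorem is exactly the number of edges of the associated graph $\E = G$ — equals $n$.

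First I would recall that the Hecke triangle group $\g = H_{\mu_q}$ acts on $\H$ so that a fundamental domain is obtained by gluing two adjacent copies of $\D$ (one and its mirror image across an $f$-edge), and the full $\g$-translates of $\D$ tessellate $\H$. An index-$n$ subgroup $\Phi$ therefore has a fundamental domain which is a union of $n$ such pairs? — no: more carefully, $\D$ together with its reflection forms a fundamental domain for the \emph{extended} Hecke group, and $\D$ itself (two copies, suitably glued) gives the fundamental domain for $\g$; the cleanest normalization is that $S = \Phi\backslash\H$ is covered by exactly $n$ images of the fundamental domain of $\g$, hence by $n$ copies of the "double triangle" $D_e''$ appearing in the proof above. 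Since in that construction the copies $D_e''$ are indexed precisely by the edges $e$ of $G$, and each $D_e''$ is a fundamental domain for one coset, the number of edges of $G$ equals $[\g : \Phi] = n$. I would then note, for consistency, that the edges of $G$ are exactly the $f$-edges of $S$, and these are in bijection with the $D_e''$ blocks by construction — each $f$-edge sits inside a unique such block and each block contains exactly one $f$-edge — closing the count.

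The main obstacle, and the step requiring the most care, is pinning down the exact normalization of "fundamental domain" so that the factor of two (triangle versus double-triangle, Hecke group versus extended Hecke group) does not creep in and spoil the count. Concretely I must verify that the basic region whose $\g$-translates tile $\H$, and of which $\Phi\backslash\H$ contains exactly $[\g:\Phi]$ copies, is the same region $D_e''$ that the proof of the theorem attaches to a single edge $e$ — i.e. the hyperbolic triangle with angles $0,0,\tfrac{2\pi}{q}$ and vertices $0,\infty,\rho$. Granting that identification, the corollary is immediate: edges of $G \leftrightarrow$ blocks $D_e'' \leftrightarrow$ cosets of $\Phi$ in $\g$, giving $n$ edges. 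I would therefore structure the write-up as: (1) fix the normalization and recall that $[\g:\Phi]$ copies of $D_e''$ tile $S$; (2) invoke the edge-indexing of the $D_e''$ from the theorem's proof; (3) conclude $\#\{\text{edges of } G\} = n$.
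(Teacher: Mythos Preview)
Your proposal is correct and follows essentially the same approach as the paper: the fundamental domain of an index-$n$ subgroup consists of $n$ copies of the fundamental domain of $\g$, and by the construction in the theorem's proof each edge $e$ of the bipartite $q$-boid graph indexes exactly one such copy $D_e''$, giving $n$ edges. The paper's proof is a terse two-line version of your argument; your extra care about the factor-of-two normalization (confirming that the double triangle $D_e''$, not the single triangle $\D$, is the fundamental domain of $\g$) is warranted and correctly resolved.
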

\begin{proof} Since for an index $n$ subgroup $\Gamma$ of $\g$, the fundamental domain of $\Gamma$ contains $n$ copies of the fundamental domain of $\g$. From the construction of the bipartite $q$-boid graphs for $\Gamma$, each edge of the bipartite $q$-boid corresponds to a copy of the fundamental domain for $\g$.
\end{proof}

\section{$q$-Boid Tree Diagrams}

In this section we will introduce a tree diagram corresponding to the bipartite graph discuss in the previous section:

\begin{dfn} A $q$-boid tree diagram is a finite tree, denoted by $T$, with at least one edge, such that 

\begin{enumerate}

\item[(i)] All the internal vertices are of valence $q$, 

\item[(ii)] There is a prescribed cyclic order on the edges incident at each internal vertex, 

\item[(iii)] The terminal vertices are partitioned into two possibly empty subsets $R$ and $B$ where the vertices in $R$ (resp. $B$) are called red (resp. blue) vertices, 

\item[(iv)] There is an involution $\sigma$ on $R$. 
\end{enumerate}
\end{dfn}

A tree diagram, as we shall show, is an useful device for the constructions of subgroups of finite index in $\g$. 

\medskip $T$ can be embedded in the plane so that the cyclic order on the edges at each internal vertex coincides with the one induced by the orientation of the plane. Any two such embeddings are in fact isotopic to each other. 

\medskip An isomorphism of two tree diagrams is defined in the obvious way and amounts to an isotopy class of planar trees satisfying (i) above. So a tree diagram can be best represented on paper without explicitly indicating the cyclic order; the red (resp. blue) vertices are represented by a small hollow (resp. shaded) circles; and distinct red vertices related by $\sigma$ are given the same numerical label, it being understood that the unlabelled vertices are fixed by $\sigma$ and different pairs of distinct red vertices related by $\sigma$ carry different labels. 

\newblock The correspondences among the special polygons, the bipartite $q$-boid graphs, and the tree diagrams are as follows. 

\begin{thm} There is a finite-to-one map from isomorphism classes of tree diagrams and isomorphism classes of bipartite $q$-boid graphs. 
 \end{thm}

 Let $T$ be a tree diagram. Identifying $v$ with $\sigma(v)$ one obtains a graph $G$. On all edges joining two internal vertices or an internal vertex with a blue vertex introduce a new vertex of valence $2$. These new vertices and the red vertices consitute $V_0$. The vertices of valence $q$ and the blue vertices constitute $V_1$ The cyclic orders on the vertices in $V_1$ are defined by ii) above. This turns $G$ into a bipartite $q$-boid graph. 

\medskip Conversely let $G$ be a bipartite $q$-boid graph. If its cycle-rank (= the first Betti number) is $r$ we can choose $r$ vertices of valence $2$ in $V_0$ so that cutting $G$ along these vertices we obtain a tree $T$. Corresponding to these $r$ cuts we have $2r$ terminal vertices in $T$. These $2r$ vertices and the terminal vertices of valence 1 in $V_0$ constitute the red vertices, and the terminal vertices in $V_1$ constitute the blue vertices. Set up the involution $\sigma$ as fixing the terminal vertices of valence 1 in $V_0$ and interchanging the two vertices obtained by each one of the $r$ cuts. We agree not to count the remaining vertices of valence 2 in $V_0$ as vertices. Finally the cyclic order on the edges incident at the vertices of valence $q$ in $T$ is the same as that in $G$. This turns $T$ into a tree diagram. Notice that $T$ depends on the choices of the $r$ cuts. 

\medskip It is clear that we have a well-defined finite-to one map from the isomorphism classes of tree diagrams onto those of bipartite cuboid graphs. 

\begin{thm} There is a finite-to-one map from special polygons onto the isomorphism classes of tree diagrams. 
 \end{thm}

Let $P$ be a special polygon and $T$ the union of all the $f$-edges in $P$. We agree not to count the even vertices in int $P$ as vertices. The even vertices resp. odd vertices in $\partial P$ constitute the red resp. blue vertices. The involution on the red vertices is given by the side-pairing datum in $P$. Finally the cyclic order on the edges incident to the vertices of valence $q$ is induced by the orientation of $P$. This turns $T$ into a tree diagram. 

\medskip Conversely let $T$ be a tree diagram. On all edges joining two internal vertices or an internal vertex with a blue vertex introduce a new vertex of valence $2$. Equip $T$ with a metric in a standard way so that each edge has the same length equal to the length of an $f$-edge (which is equal to In $q$). $T$ must have at least one red vertex or at least one blue vertex. Suppose it has a red vertex $v$. Isometrically develop the unique edge containing $v$ onto the $f$-edge joining $i$ to $p$. Then $T$ itself develops isometrically and uniquely along the $f$-edges in $\T$ so that the cyclic orders on the edges incident at the vertices of valence $q$ in $T$ match with the ones induced by the orientation of $H$. At the image of a red vertex $v$ in this development assign the even line passing through that even vertex. These even edges are paired if the vertex $v$ is fixed by the involution $\sigma$. Otherwise this complete geodesic will be considered as a free side. It will be paired with the other free side constructed at $\sigma (v)$. Similarly at the image of a blue vertex incident to the (unique) edge say $e$ assign those two $q$-edges which make an angle $\frac{\pi}{q}$ with the image of $e$. These odd edges are paired. It is easy to see that these even sides, odd sides, and free sides together with their pairing defines a special polygon. 

\medskip It is fairly clear that we have a well-defined finite-to-one map from special polygons onto the isomorphism classes of tree diagrams.

\end{document}